\title{\astar for Bounding Shortest Paths in the Graphs of Convex Sets}
\author {
    Kaarthik Sundar \textsuperscript{\rm 1},
    Sivakumar Rathinam \textsuperscript{\rm 2}
}
\newcommand{\astar}{$A^*$\xspace}
\newcommand{\agcs}{$A^*$-$GCS$\xspace}
\newcommand{\bagcs}{\textit{\textbf{A$^*$-{GCS}}}\xspace}
\newcommand{\agcsi}{$A^*$-$GCS_{1}$\xspace}
\newcommand{\agcsf}{$A^*$-$GCS_{\infty}$\xspace}
\newtheorem{theorem}{Theorem}
\newtheorem{lemma}{Lemma}
\newtheorem{remark}{Remark}
\begin{document}

\maketitle

\begin{abstract}
We present a novel algorithm that fuses the existing convex-programming based approach with heuristic information to find optimality guarantees for the Shortest Path Problem in the Graph of Convex Sets (SPP-GCS). Our method, inspired by \astar, initiates a best-first-like procedure from a designated subset of vertices and iteratively expands it until further growth is neither possible nor beneficial. Traditionally, obtaining solutions with bounds for an optimization problem involves solving a relaxation, modifying the relaxed solution to a feasible one, and then comparing the two solutions to establish bounds. However, for SPP-GCS, we demonstrate that reversing this process can be more advantageous, especially with Euclidean travel costs. We present numerical results to highlight the advantages of our algorithm over the existing approach in terms of the sizes of the convex programs solved and computation time.
\end{abstract}

\section{Introduction}
The Shortest Path Problem (SPP) is one of the most important and fundamental problems in discrete optimization \cite{LAWLER1979, Korte2018}. Given a graph, the SPP aims to find a path between two vertices in the graph such that the sum of the cost of the edges in the path is minimized. In this paper, we concern ourselves with a generalization of the SPP, recently introduced in \cite{GCS}, where each vertex is associated with a convex set and the cost of the edge joining any two vertices depends on the choice of the points selected from each of the respective convex sets. In this generalization, referred to as the {\it Shortest Path Problem in the Graph of Convex Sets} (SPP-GCS), the objective is to find a path and choose a point from each convex set associated with the vertices in the path such that the sum of the cost of the edges in the path is minimized (Fig. \ref{fig:GCS}). 

SPP-GCS reduces to the standard SPP if the point to be selected from each convex set is given. Also, SPP-GCS reduces to a relatively easy-to-solve convex optimization problem if the path is given. Selecting an optimal point in each set, as well as determining the path, makes the SPP-GCS NP-Hard in the general case \cite{GCS}.

We are interested in SPP-GCS because it naturally models SPPs in the geometric domain and planning problems with neighborhoods, with a variety of applications in motion planning \cite{GCS_science, marcucci2023fast, natarajan2024ixg}, sensor coverage \cite{burdick2024, DUMITRESCU2003}, and hybrid control \cite{GCS}. For example, the SPP in the presence of obstacles in 3D can be posed as an SPP-GCS by decomposing the free space into convex sets and then planning on the corresponding graph of these sets. The sensor coverage problem in 2D \cite{burdick2024}, where the robot needs to find a suitable sequence of neighborhoods to traverse while choosing a point from each neighborhood, can also be formulated as a variant of SPP-GCS.

\begin{figure}
    \centering
\includegraphics[scale=0.4]{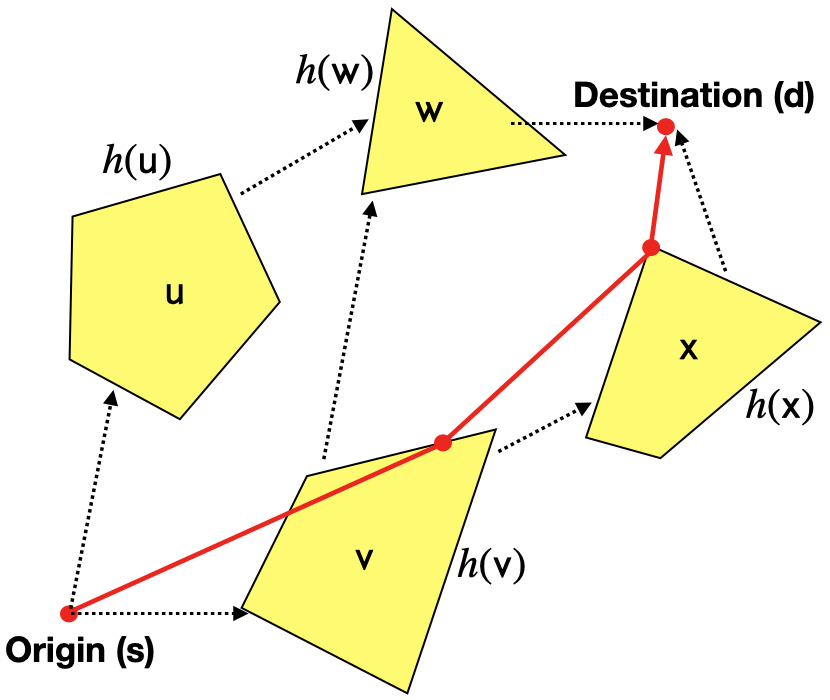}
    \caption{Illustration of the SPP-GCS. The graph has six vertices, with the origin and the destination corresponding to singleton sets. The dotted lines represent the edges in the graph. Each vertex $t$ in the graph is associated with a heuristic value $h(t)$. A feasible path is shown with solid (red) line segments.}
    \label{fig:GCS}
\end{figure}

The importance of formulating a planning problem as an SPP-GCS stems primarily from the following observations: finding optimality guarantees for SPPs in a geometric domain or for planning problems with neighborhoods has been computationally challenging, even in special cases. For example, computing the optimum or a tight lower bound for an SPP in 3D with convex or axis-aligned obstacles remains difficult \cite{mitchell}. While theoretical bounds using polynomial-time approximation schemes are available \cite{mitchell}, we are not aware of any implementations of such schemes with numerical results. Recently, significant progress \cite{GCS} has been made towards addressing this issue where the planning problem is posed as a SPP-GCS and bounds are obtained by solving a tight convex relaxation of the SPP-GCS. Feasible solutions can also be found by either rounding the relaxed solutions or using a sampling-based heuristic (as employed in this paper). Optimality guarantees can then be obtained by comparing the convex relaxation cost (a lower bound) with the length (an upper bound) of a feasible solution. Since the crux of the procedure in \cite{GCS} lies in solving a convex relaxation of the SPP-GCS, this paper is motivated by the need to explore faster methods for solving this relaxation, with the ultimate goal of generating optimality guarantees for planning problems in geometric domains.

One way to develop fast algorithms for an optimization problem is to incorporate heuristic information into the search process. The celebrated $A^*$ algorithm \cite{hart1968aFormalBasis} accomplishes this for the SPP. The objective of this paper is to propose a new algorithm that combines the existing convex-programming based approach in \cite{GCS} with heuristic information to find lower bounds and optimality guarantees for the SPP-GCS. 

We note that there is a parallel development \cite{chia2024gcsforwardheuristicsearch} that aims to incorporate heuristic information into a best-first search process to find optimal solutions for the SPP-GCS. In this process, sub-optimal solutions are pruned based on a domination criterion, which is non-trivial to check for general convex sets and travel costs. However, when the sets are polytopic and the travel costs are linear, algorithms \cite{chia2024gcsforwardheuristicsearch} are available to conservatively prune sub-optimal solutions. This search process and its variants seeks to converge to the optimum from above, which is complementary to the lower-bounding approach we pursue in this paper.

We refer to our algorithm as $A^*$ for Graphs of Convex Sets (\agcs). Just like in $A^*$, we assume that each vertex (or each convex set) has an associated heuristic cost to go from the vertex to the destination (Fig. \ref{fig:GCS}). \agcs follows the spirit of \astar and initiates a best-first-like search procedure from a designated set $S$ (containing the origin) and iteratively builds on it until further growth is neither possible nor beneficial. A key step in \agcs lies in the way we grow $S$ in each iteration. To choose vertices for addition to $S$, we employ a relaxed solution to a generalization of the SPP-GCS that encompasses the vertices in $S$ and their neighbors. In the special case when the convex sets reduce to singletons and the heuristic information is consistent, \agcs reduces to \astar. On the other hand, if heuristic information is not available, \agcs reduces to iteratively solving a convex relaxation of SPP-GCS until the termination conditions are satisfied.  

Unlike \astar, which typically starts its search from a closed set $S$ containing only the origin, \agcs can start its search from any set $S$ as long as it induces a cut — meaning $S$ contains the origin but not the destination. In this paper, we consider two initial choices for $S$ to start the search: one where $S$ only contains the origin, and another where $S$ contains vertices informed by applying \astar to a representative point ($e.g.$ the centroid) from each convex set. One of the key insights we infer from the numerical results with Euclidean travel costs is that the second choice where \agcs starts with the set $S$ informed by \astar provides the best trade-off between achieving good quality bounds and smaller computation times. {In general, \agcs works on relatively smaller convex programs because its vertices are informed by the $A^*$ algorithm, which only explores a subset of vertices in the graph. This leads to reduced computation times compared to solving a convex relaxation on the entire graph.}

Typically, to obtain optimality guarantees for an optimization problem, we solve a relaxation of the problem, modify the relaxed solution to obtain a feasible solution, and then compare the two solutions to establish bounds. {This is the approach followed in \cite{GCS}.} {\it In this paper, we demonstrate that reversing this process can be more effective in obtaining bounds for the SPP-GCS, especially with Euclidean travel costs. Specifically, we first implement \astar using a representative point from each convex set to obtain a feasible solution. We then solve a convex program on subsets of vertices informed by \astar to find a relaxed solution and subsequently compare the solutions.} In hindsight, this approach seems natural; since \astar quickly finds good feasible solutions for the SPP-GCS with a heuristic, it is reasonable to expect that good lower bounds can also be found in the neighborhood of the vertices visited by \astar, based on the strong formulation in \cite{GCS}. 

After presenting our algorithm with theoretical guarantees, we provide extensive computational results on the performance of \agcs for instances derived from mazes, axis-aligned bars and 3D villages to highlight the advantages of our algorithm over the existing approach in terms of the sizes of the convex programs solved and computation time.

\section{Problem Statement}
Let $V$ denote a set of vertices, and $E$ represent a set of directed edges joining vertices in $V$. Let each vertex $v\in V$ be associated with a non-empty, compact convex set $\mathcal{X}_v\subset {\mathbb{R}}^n$. Given vertices $u,v\in V$, the cost of traveling the edge $e=(u,v)$ from $u$ to $v$ depends on the choice of the points in the sets $\mathcal{X}_u$ and $\mathcal{X}_v$. Specifically, given points $x_u\in \mathcal{X}_u$ and $x_v\in \mathcal{X}_v$, let $cost(x_u,x_v)$ denote the travel cost of edge $e=(u,v)$. In this paper, we set $cost(x_u,x_v)$ to be equal to the Euclidean distance between the points $x_u$ and $x_v$, $i.e.$, $cost(x_u,x_v):=\lVert x_u-x_v \rVert _2$.  

    Any path in the graph $(V,E)$ is a sequence of vertices $(v_1,v_2,\cdots,v_k)$ for some positive integer $k$ such that $v_i\in V,$ $i=1,\cdots,k$ and $(v_i,v_{i+1})\in E,$ $i=1,\cdots,k-1$. Given a path $\mathcal{P}:=(v_1,v_2,\cdots,v_k)$ and points $x_{v_i} \in {\mathcal{X}}_{v_i}$, $i=1,\cdots,k$, the cost of traveling $\mathcal{P}$ is defined as $\sum_{i=1}^{k-1} cost(x_{v_i},x_{v_{i+1}})$. Given an origin $s\in V$ and destination $d\in V$, the objective of the SPP-GCS is to find a path $\mathcal{P}:=(v_1,v_2,\cdots,v_k)$ and points $x_{v_i} \in {\mathcal{X}}_{v_i}$, $i=1,\cdots,k$, such that $v_1=s$, $v_k=d$, and the cost of traveling $\mathcal{P}$ is minimized. The optimal cost for SPP-GCS is denoted as \boldmath ${C_{opt}(s,d)}$. \unboldmath
 
\section{Generalization of SPP-GCS and its Relaxation}

We first introduce some notations specific for \agcs. For each vertex $v\in V$, let $h(v)$ denote an underestimate to the optimal cost for the SPP-GCS from $v$ to $d$, $i.e.$, $h(v)\leq C_{opt}(v,d)$. We refer to such underestimates as admissible (just like in \astar). Note that for the destination $d$, the underestimate $h(d)=0$. We also refer to $h(.)$ as a heuristic function for \agcs.
Given any $S\subset V$, let $N_S$ denote the set of all the vertices in $V\setminus S$ that are adjacent to at least one vertex in $S$, $i.e.$ $N_S:=\{ v : u\in S, v\notin S, (u,v)\in E\}$. $N_S$ is also referred to as the neighborhood of $S$.

\begin{figure}
    \centering
\includegraphics[scale=0.35]{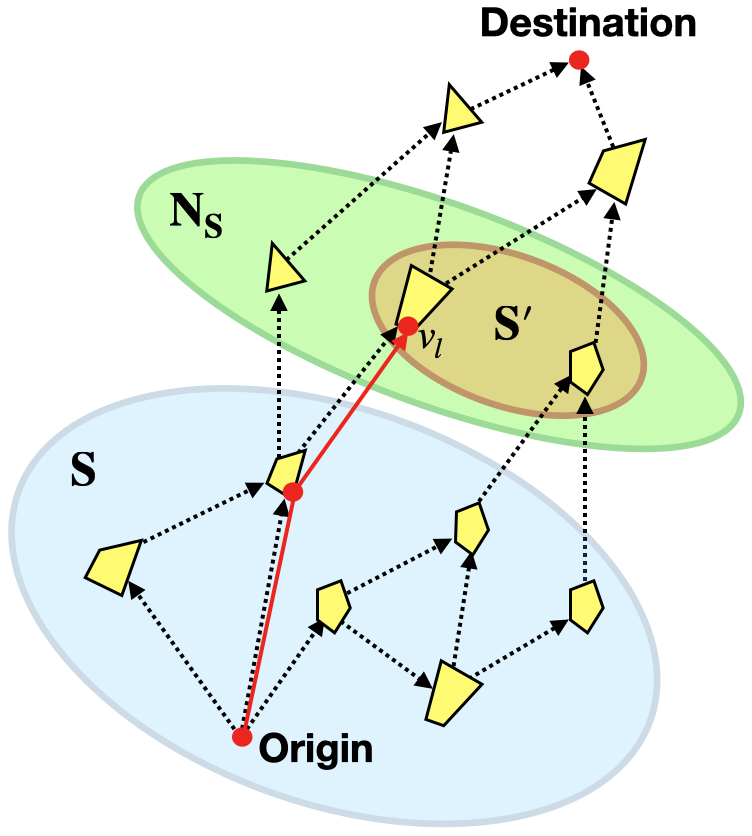}
    \caption{Setup in the SPP*-GCS showing the cut-set $S$, and subsets $N_S$ and $S'$. $N_S$ contains all the vertices in $V\setminus S$ that are adjacent to $S$, and $S'$ is any nonempty subset of $N_S$. A feasible path for SPP*-GCS is shown with solid (red) line segments.}
    \label{fig:GSPP}
\end{figure}
 
Now, consider a closely related problem to SPP-GCS referred to as SPP*-GCS defined on non-empty subsets $S\subset V$ and $S'\subseteq N_S$ such that $s\in S$ and $ d\notin S$. We will refer to any set $S\subset V$ that satisfies $s\in S$ and $ d\notin S$ as a {\it cut-set} in this paper (refer to Fig. \ref{fig:GSPP}). We will restrict our attention to the vertices in $\overline{V}:=S\bigcup S'$ and the edges in $\overline{E}:=\{(u,v): u\in S,v\in \overline{V},(u,v)\in E\}$. The objective of SPP*-GCS is to find a {terminal vertex} $v_l$, a path $\mathcal{P}_g:=(v_1,v_2,\cdots,v_l)$ in the graph $(\overline{V},\overline{E})$ and points $x_{v_i} \in {\mathcal{X}}_{v_i}$, $i=1,\cdots,l$  such that $v_1=s$, $v_l\in S'$, and the sum of the cost of traveling $\mathcal{P}_g$ and the heuristic cost, $h(v_l)$, is minimized. The optimal cost for SPP*-GCS is denoted as \boldmath $C^*_{opt}(S,S')$\unboldmath. The following Lemma shows the relationship between SPP*-GCS and SPP-GCS.

\begin{lemma}
    SPP*-GCS is a generalization of SPP-GCS.
\end{lemma}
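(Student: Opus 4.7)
The plan is to exhibit, for every instance of SPP-GCS, a specific choice of $S$ and $S'$ that turns the resulting SPP*-GCS instance into a problem with the same optimum as the original SPP-GCS. Concretely, given an SPP-GCS instance on $(V,E)$ with origin $s$ and destination $d$, I take
\[
S \;=\; V\setminus\{d\}, \qquad S' \;=\; \{d\}.
\]
The heuristic $h(\cdot)$ is left unchanged; in particular, $h(d)=0$ is part of the admissibility convention already fixed in the excerpt.

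Next, I would verify that this choice is admissible in the sense required by the definition of SPP*-GCS. First, $S$ is a cut-set because $s\in V\setminus\{d\}=S$ (assuming the non-trivial case $s\neq d$) and $d\notin S$. Second, I need $S'\subseteq N_S$: since the SPP-GCS is assumed solvable, $d$ has at least one incoming edge $(u,d)\in E$ with $u\in S$, hence $d\in N_S$ and $S'=\{d\}\subseteq N_S$. With this, $\overline{V}=S\cup S'=V$, and $\overline{E}=\{(u,v):u\in S,\ v\in\overline{V},\ (u,v)\in E\}$ contains every edge of $E$ except those originating at $d$; since no path from $s$ to $d$ in the original SPP-GCS ever leaves $d$, this restriction loses no feasible solution.

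I would then establish the two inequalities $C^*_{opt}(S,S')\le C_{opt}(s,d)$ and $C_{opt}(s,d)\le C^*_{opt}(S,S')$ by a direct translation of feasible solutions. Any feasible $(\mathcal{P},\{x_{v_i}\})$ for SPP-GCS satisfies $v_1=s$, $v_k=d$, uses only edges of $\overline{E}$, and is therefore feasible for SPP*-GCS with terminal vertex $v_l=d\in S'$; its SPP*-GCS cost is the path cost plus $h(d)=0$, i.e., identical to its SPP-GCS cost. Conversely, any SPP*-GCS feasible solution must terminate at the unique vertex of $S'$, which is $d$, so it is automatically an $s$-to-$d$ path in $(V,E)$ with the same cost. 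Matching optima follows.

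The main conceptual obstacle, such as it is, is the \emph{interpretation} of ``generalization'': I need to argue that the mapping from an SPP-GCS instance to the SPP*-GCS instance above is a cost-preserving reduction, and in particular that nothing in the restriction of the edge set $\overline{E}$ or in the ``terminal vertex plus heuristic'' objective of SPP*-GCS excludes any optimal $s$-$d$ path or alters its value. The two verifications above (admissibility of the cut-set choice, and the bijection between feasible solutions preserving cost via $h(d)=0$) settle exactly this point, so no deeper argument is required.
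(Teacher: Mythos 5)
Your proposal is correct and follows essentially the same route as the paper: choose $S=V\setminus\{d\}$ and $S'=\{d\}$, note $h(d)=0$, and conclude $C^*_{opt}(V\setminus\{d\},\{d\})=C_{opt}(s,d)$. The extra verifications you supply (that this choice is a valid cut-set with $\{d\}\subseteq N_S$, and the cost-preserving correspondence of feasible solutions) are details the paper leaves implicit, but the underlying argument is identical.
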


\begin{proof}
Without loss of generality, we assume there is at least one feasible path from $s$ to $d$ in the graph $(V,E)$. Therefore, if $S:=V\setminus \{d\}$, then $N_S:=\{d\}$. Hence, $C^*_{opt}(V\setminus \{d\},\{d\})=C_{opt}(s,d)$ as $h(d)=0$.
\end{proof}

\subsection{Non-Linear Program for SPP*-GCS}
 
 To formulate SPP*-GCS, we use two sets of binary variables to specify the path and a set of continuous variables to choose the points from the convex sets corresponding to the vertices in the path. The first set of binary variables, denoted by $y_{uv}$ for $(u,v) \in \overline{E}$, determines whether the edge $(u,v)$ is selected in a solution to the SPP*-GCS. Here, $y_{uv} = 1$ implies that the edge $(u,v)$ is chosen, while $y_{uv} = 0$ implies the opposite. The second set of variables, denoted by $\alpha_v$ for $v \in S'$, determines whether the vertex $v$ is selected as a {terminal} or not. The continuous variable $x_{u}$ for $u\in \overline{V}$ specifies the point chosen in the convex set corresponding to vertex $u$. For any vertex $u\in \overline{V}$, let $\delta^+(u)$ be the subset of all the edges in $\overline{E}$ leaving $u$, and let $\delta^-(u)$ be the subset of all the edges in $\overline{E}$ coming into $u$. The non-linear program for SPP*-GCS is as follows:

 {\small
\begin{equation}\label{eq:SPP:obj}
C^*_{opt}(S,S') :=   \textrm{min}  \sum_{(u,v)\in \overline{E}} cost(x_u,x_v)y_{uv} + \sum_{v \in S'} \alpha_v h(v)
\end{equation}
\begin{align}
    \sum_{v\in S'}{\alpha_v } & =1  \label{eq:sumalpha}\\
    \sum_{v\in \overline{V}} y_{sv} & = 1 \label{eq:sdegree}\\
    \sum_{(u,v)\in \overline{E}} y_{uv} &= \alpha_v \textrm{ for }v\in S'  \label{eq:tdegree}
    \end{align}
    \begin{align}
    \sum_{(u,v)\in \delta^-{(v)}} y_{uv} &=\sum_{ (v,u)\in \delta^+{(v)}} y_{vu} \textrm{ for }v\in S\setminus \{s\}  \label{eq:idegree} \\
    x_u & \in \mathcal{X}_u\textrm{ for all } u\in \overline{V}  \label{eq:perspective}\\
    y_{uv} & = \{0,1\}\textrm{ for all } (u,v)\in \overline{E} \\ \alpha_v & =\{0,1\}\textrm{ for all } v\in S'  \label{eq:alphabinary}
\end{align}
}
Constraints \eqref{eq:sumalpha} state that exactly one vertex in $S'$ must be chosen as a {terminal}. Constraints \eqref{eq:sdegree}-\eqref{eq:idegree} state the standard flow constraints for a path from $s$ to a {terminal} in $S'$. Constraint \eqref{eq:perspective} states that for any vertex $u\in \overline{V}$, its corresponding point must belong to $\mathcal{X} _u$. The main challenge in solving the above formulation arises from the multiplication of the travel costs and the binary variables in \eqref{eq:SPP:obj}. To address this challenge, we follow the approach in \cite{GCS} and reformulate SPP*-GCS as a Mixed Integer Convex Program (MICP) in the next subsection.

\subsection{MICP for SPP*-GCS}

In this paper, as we are dealing with travel costs derived from Euclidean distances, the objective in \eqref{eq:SPP:obj} can be re-written as:
{\small
\begin{align*}
C^*_{opt}(S,S') & =  \textrm{min}  \sum_{(u,v)\in \overline{E}}   \lVert x_uy_{uv}-x_vy_{uv}\rVert_2  + \sum_{v \in S'} \alpha_v h(v).
\end{align*}
}
In \cite{GCS}, the bi-linear terms $x_uy_{uv}$ and $x_vy_{uv}$ in the objective above are replaced with new variables, and the constraints in \eqref{eq:idegree},\eqref{eq:perspective} are transformed to form a MICP. Before we present the MICP corresponding to SPP*-GCS, we first need to define the {\it perspective} of a set. The perspective of a compact, convex set $\mathcal{X} \subset \mathbb{R}^n$ is defined as $\overline{\mathcal{X}}:= \{(x,\lambda): \lambda\geq 0, x\in \lambda \mathcal{X}\}$. Now, let $x_uy_{uv} = z_{uv}$ and $x_vy_{uv} = z'_{uv}$ for all $(u,v)\in \overline{E}$. By following the same procedure outlined in \cite{GCS}, we derive a Mixed-Integer Convex Program (MICP) for the SPP*-GCS as follows:

{\small
\begin{equation}\label{eq:MICP:obj}
 \textrm{min}  \sum_{(u,v)\in \overline{E}} \lVert z_{uv}-z'_{uv}\rVert_2  + \sum_{v \in S'} \alpha_v h(v) 
\end{equation}

\begin{align}
  \sum_{v \in S'}{\alpha_v} & =1 \label{eq:alphasumMICP}\\
    \sum_{v\in \overline{V}} y_{sv} & = 1 \label{eq:sdegreeMICP}
\end{align}
\begin{align}
    \sum_{(u,v)\in \overline{E}} y_{uv} &= \alpha_v \textrm{ for }v\in S'\\
    \sum_{(u,v)\in \delta^-{(v)}} (z'_{uv},y_{uv}) &=\sum_{ (v,u)\in \delta^+{(v)}} (z_{vu},y_{vu}) \textrm{ for }v\in S\setminus \{s\} 
    \end{align}
    \begin{align}
    (z_{uv},y_{uv}) & \in \overline{\mathcal{X}}_u\textrm{ for all } u\in \overline{V}, (u,v)\in \overline{E} \\
    (z'_{uv},y_{uv}) & \in \overline{\mathcal{X}}_v\textrm{ for all } v\in \overline{V}, (u,v)\in \overline{E} \label{eq:perspectiveMICP}\\
    y_{uv} & = \{0,1\}\textrm{ for all } (u,v)\in \overline{E} \\ \alpha_v & =\{0,1\}\textrm{ for all } v\in S' \label{eq:alphaMICP}
\end{align}}

\begin{lemma}
    The MICP formulation in \eqref{eq:MICP:obj}-\eqref{eq:alphaMICP} for the SPP*-GCS has an optimal value equal to $C^*_{opt}(S,S')$.
\end{lemma}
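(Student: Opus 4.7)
The plan is to establish equivalence between the MICP formulation \eqref{eq:MICP:obj}--\eqref{eq:alphaMICP} and the non-linear program \eqref{eq:SPP:obj}--\eqref{eq:alphabinary} via the substitutions $z_{uv} = y_{uv}x_u$ and $z'_{uv}=y_{uv}x_v$, by exhibiting a cost-preserving correspondence between their feasible regions and then arguing that the optima coincide. Since the binary, flow, and selection constraints \eqref{eq:alphasumMICP}--\eqref{eq:sdegreeMICP} and the analogous $\alpha$-constraint match \eqref{eq:sumalpha}--\eqref{eq:sdegree} one-to-one, the work is concentrated on the perspective conditions \eqref{eq:perspective} versus \eqref{eq:perspectiveMICP}, the combined flow constraint on $(z,y)$, and the objective value.

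First, I would handle the forward direction: take any feasible $(x,y,\alpha)$ of the non-linear program and define $z_{uv}:=y_{uv}x_u$, $z'_{uv}:=y_{uv}x_v$ for each $(u,v)\in\overline{E}$. By construction, $(z_{uv},y_{uv})\in\overline{\mathcal{X}}_u$ (either $y_{uv}=1$ and $x_u\in\mathcal{X}_u$, or $y_{uv}=0$ and $z_{uv}=0$, which is in $0\cdot\mathcal{X}_u=\{0\}$ since $\mathcal{X}_u$ is compact); similarly for $(z'_{uv},y_{uv})\in\overline{\mathcal{X}}_v$. The combined flow constraint at $v\in S\setminus\{s\}$ splits into the original $y$-flow constraint \eqref{eq:idegree} plus the identity $\sum_{(u,v)\in\delta^-(v)}y_{uv}x_v=\sum_{(v,u)\in\delta^+(v)}y_{vu}x_v$, which is immediate from \eqref{eq:idegree}. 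Finally, since $y_{uv}\in\{0,1\}$,
\[
\|z_{uv}-z'_{uv}\|_2=y_{uv}\|x_u-x_v\|_2=cost(x_u,x_v)\,y_{uv},
\]
so the objective value is preserved. This shows the MICP optimum is at most $C^*_{opt}(S,S')$.

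Next I would establish the reverse direction, which is the substantive part. Given a feasible MICP solution $(z,z',y,\alpha)$, I must reconstruct compatible points $x_u\in\mathcal{X}_u$ so that $(x,y,\alpha)$ is feasible for \eqref{eq:SPP:obj}--\eqref{eq:alphabinary} with the same cost. The flow/binary constraints guarantee that the edges with $y_{uv}=1$ trace a single simple path $\mathcal{P}_g$ from $s$ to some terminal $v_l\in S'$ with $\alpha_{v_l}=1$. For each non-terminal vertex $u$ on $\mathcal{P}_g$, define $x_u:=z_{uv}$ where $(u,v)$ is the unique outgoing edge of $\mathcal{P}_g$ from $u$; the perspective condition \eqref{eq:perspectiveMICP} at $y_{uv}=1$ yields $x_u\in\mathcal{X}_u$. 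For $v_l$, set $x_{v_l}:=z'_{uv_l}$ on its unique incoming path-edge. For vertices off the path, pick any point in $\mathcal{X}_u$ (nonempty by assumption); since the coefficients $y_{uv}$ on all incident edges vanish, these points do not enter the objective. Substituting back yields $\|z_{uv}-z'_{uv}\|_2 = y_{uv}\,cost(x_u,x_v)$ again, so the cost is preserved, giving $C^*_{opt}(S,S')\le$ MICP optimum.

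\textbf{Main obstacle.} The delicate step is well-definedness of $x_v$ at an interior vertex $v$ of $\mathcal{P}_g$, where $v$ has both a path-edge in and a path-edge out: one could define $x_v$ via the incoming $z'_{uv}$ or the outgoing $z_{vw}$, and these must agree for the reconstruction to be consistent and for the perspective conditions on all incident edges to hold. This is exactly what the combined flow constraint $\sum_{(u,v)\in\delta^-(v)}(z'_{uv},y_{uv})=\sum_{(v,u)\in\delta^+(v)}(z_{vu},y_{vu})$ enforces: when the scalar flow is one, the $z$-components force $z'_{uv}=z_{vw}$, so the two candidate values of $x_v$ coincide. A secondary but routine point is verifying that edges with $y_{uv}=0$ contribute zero to both the MICP and non-linear objectives, which uses compactness of $\mathcal{X}_u$ to ensure $z_{uv}=z'_{uv}=0$ via the perspective definition.
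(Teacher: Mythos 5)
Your forward direction (substituting $z_{uv}=y_{uv}x_u$, $z'_{uv}=y_{uv}x_v$, checking the perspective memberships and that the objective is preserved) is correct, and note that the paper itself does not spell any of this out: its proof is a one-line appeal to the proof of Theorem 5.7 of the GCS paper, so a self-contained argument like yours is a legitimately different (more explicit) route.

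There is, however, a genuine gap in your reverse direction. You assert that the binary flow constraints ``guarantee that the edges with $y_{uv}=1$ trace a single simple path'' from $s$ to the chosen terminal. That is false: constraints \eqref{eq:alphasumMICP}--\eqref{eq:perspectiveMICP} only impose one unit of outflow at $s$, one unit of inflow at the terminal, and flow conservation on $S\setminus\{s\}$, so a binary feasible point may consist of the $s$--terminal path together with directed cycles inside $S\setminus\{s\}$, and such a cycle may pass through a path vertex $v$. At such a vertex the scalar flow is $2$, and your key well-definedness step collapses: the vector constraint only ties sums, e.g.\ $z'_{uv}+z'_{av}=z_{vw}+z_{vb}$, and no longer forces the incoming path value $z'_{uv}$ to equal the outgoing path value $z_{vw}$, so there need not be a single point $x_v$ reproducing the MICP cost edge-by-edge. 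The lemma remains true, but closing this gap requires an additional idea: one must show that cycles can be removed without increasing the objective. For instance, in the configuration above the flow constraint gives $z'_{uv}-z_{vw}=z_{vb}-z'_{av}$, and the triangle inequality applied around the cycle bounds its total cost from below by $\lVert z_{vb}-z'_{av}\rVert$; hence, taking $x_v:=z_{vw}$, the mismatch $\lVert z'_{uv}-z_{vw}\rVert$ incurred on the incoming path edge can be charged to the cost of the deleted cycle (with a bit more bookkeeping when several cycles or shared vertices are involved). This cycle-handling, together with discarding cycles disjoint from the path (harmless since all cost terms are nonnegative), is precisely the substance of the cited Theorem 5.7 argument and is missing from your write-up; the rest of your reconstruction (defining $x_u$ from $z_{uv}$ or $z'_{uv}$ on flow-one vertices, arbitrary points off the support, and the $y_{uv}=0$ edges contributing zero) is fine.
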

\begin{proof}
    This result follows the same proof as Theorem 5.7 in \cite{GCS}.  
\end{proof}

\noindent We now arrive at the relaxation used in our algorithm:
{\small
\begin{tcolorbox}[colback=blue!5!white,colframe=blue!75!black,title={\bf Convex Relaxation for SPP*-GCS}]
\begin{equation*}\label{eq:Relax:obj}
\boldsymbol{R^*_{opt}(S,S')} = \textrm{min}  \sum_{(u,v)\in \overline{E}} \lVert z_{uv}-z'_{uv}\rVert_2  + \sum_{v \in S'} \alpha_v h(v) 
\end{equation*}
subject to the constraints in \eqref{eq:alphasumMICP}-\eqref{eq:perspectiveMICP} and,
\begin{align}
    0\leq y_{uv} \leq 1\textrm{ for all }(u,v)\in \overline{E}, \nonumber \\
    0\leq \alpha_v \leq 1\textrm{ for all }v \in S'. \label{eq:relax}
\end{align}
\end{tcolorbox}
}
\section{\bagcs}

\agcs (Algorithm \ref{Alg:AGCS}) starts with the input set $S := S_{\text{init}}$ and iteratively grows $S$ until further growth is not possible or is not beneficial. This set $S$ during any iteration of \agcs will always be a cut-set. \agcs keeps track of the growth of $S$ in two phases ({\it Phase 1} and {\it Phase 2}). The phases are determined based on the presence of the destination in the neighborhood of $S$. 

In {\it Phase 1}, $d$ is not a member of the neighborhood of $S$ (line \ref{whilePhase1} of Algorithm \ref{Alg:AGCS}). Therefore, in this phase, we can find lower bounds by solving the relaxation (line \ref{solveRelaxPhase1} of Algorithm \ref{Alg:AGCS}), but we cannot convert a relaxed solution to a feasible solution for SPP-GCS. Additionally, we add a vertex $v \in S'$ to $S$ if any edge $(u, v)$ leaving $S$ has $y_{uv} > 0$ (line \ref{updateSubsetPhase1} of Algorithm \ref{Alg:AGCS}, Algorithm \ref{Alg:UpdateSubset}).

In {\it Phase 2}, $d$ is part of the neighborhood of $S$, and therefore, it is possible to use the relaxation to find a lower bound and a feasible solution for SPP-GCS (lines \ref{solve1RelaxPhase2}-\ref{sol1FPhase2}, \ref{solve2RelaxPhase2}-\ref{sol2FPhase2} of Algorithm \ref{Alg:AGCS}). During this phase, we expand $S$ until one of the following two termination conditions is met: 1) all the vertices in $V\setminus \{d\}$ have already been added to $S$, or, effectively in this condition, $S$ cannot grow any further (line \ref{whilePhase2} of Algorithm \ref{Alg:AGCS}), 2) the relaxation cost of traveling through any vertex in $S':=N_S\setminus \{d\}$ becomes greater than or equal to the relaxation cost of traveling directly to $d$, or, effectively in this condition, growing $S$ further may not lead to better lower bounds (line \ref{boundPhase2} of Algorithm \ref{Alg:AGCS}). Also, the subroutine for expanding $S$ in {\it Phase 2} (line \ref{updateSetPhase2} of Algorithm \ref{Alg:AGCS}) mirrors that of {\it Phase 1}.

The following are the {\it key features} of \agcs:
\begin{itemize}
    \item The initial set $S_{\text{init}}$ can be any subset of $V$ as long as it is a cut-set. Specifically, $S_{\text{init}}$ can either consist solely of the origin (similar to how we initiate \astar for the SPP) or be generated by an algorithm. In this paper, we also consider $S_{\text{init}}$ generated by \astar as follows: Suppose $\bar{S}_{A^*}$ is the closed set (which also includes $d$, the last vertex added to $\bar{S}_{A^*}$) found by \astar when implemented on the centroids of all the convex sets associated with the vertices in $V$; let $S_{A^*}:=\bar{S}_{A^*}\setminus {d}$ and then assign $S_{\text{init}}:= S_{A^*}$. 
    \item \agcs can also be preemptively stopped at the end of any iteration. \agcs will always produce a lower bound at the end of any iteration in any phase and may produce a feasible solution during an iteration if $S$ is processed in {\it Phase 2}. 
    \item The algorithm for updating the set (Algorithm \ref{Alg:UpdateSubset}) based on the fractional values of the relaxed solutions is user-modifiable. Therefore, different variants of \agcs can be developed tailored to specific applications.
\end{itemize}

\begin{remark}
\agcs initiated with $S:=\{s\}$ mimics \astar in the special case when each of the convex sets is a singleton. Refer to the supplementary material for more details.
\end{remark}

\begin{remark}\label{rem:feasible}
While \agcs can be used to generate both bounds and feasible solutions, in this paper, we use \agcs to primarily find bounds. Also, we generate feasible solutions using the following two-step heuristic which performs reliably well for SPP-GCS: 1) Given any convex set, choose its centroid as its representative point, and run \astar on $(V,E)$ with the chosen points to find a path $\mathcal{P}_{A^*}$. 2) Let $S:= V\setminus \{d\}$ and assign $y_{uv}=1$ for each edge $(u,v)\in \mathcal{P}_{A^*}$ in the formulation  \eqref{eq:SPP:obj}-\eqref{eq:alphabinary}. Solve the resulting convex program to obtain the point corresponding to each vertex visited by $\mathcal{P}_{A^*}$.
\end{remark}

\section{Theoretical Results}

We will first demonstrate the termination of \agcs in a finite number of iterations and subsequently establish the validity of the bounds it generates. Without loss of generality, we assume there is at least one path from $s$ to $d$ in the input graph $G=(V,E)$. This assumption helps avoid certain trivial corner cases that may arise when updating the subsets (lines \ref{updateSubsetPhase1}, \ref{updateSetPhase2} of Algorithm \ref{Alg:AGCS}) or the bounds (lines \ref{updateLBPhase1}, 
 \ref{updateLBPhase2} of Algorithm \ref{Alg:AGCS}).


\begin{lemma}
    \agcs will terminate after completing at most $|V|-1$ iterations of both Phase 1 and Phase 2.
\end{lemma}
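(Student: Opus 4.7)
The plan is to show that (i) the cardinality $|S|$ strictly increases after every iteration of either Phase 1 or Phase 2, and (ii) $|S|$ is bounded above by $|V|-1$ throughout the run. Combining these two facts with the assumption $s \in S_{\text{init}}$ immediately yields the claimed bound on the total number of iterations.

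For (ii), observe that $S$ is a cut-set by construction: it always contains $s$ and never contains $d$. Indeed, $S_{\text{init}}$ is a cut-set, and a vertex is only added to $S$ via the subroutine Update-Subset (Algorithm~\ref{Alg:UpdateSubset}), which selects from $S' \subseteq N_S$; in Phase 1 this explicitly excludes $d$ (since $d \notin N_S$ is the defining condition of that phase), and in Phase 2 the additions are from $S' = N_S \setminus \{d\}$. Hence $S \subseteq V \setminus \{d\}$ at every iteration, so $|S| \leq |V|-1$.

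For (i), the key is to argue that in both phases at least one vertex with $y_{uv} > 0$ on an edge $u \in S, v \in S'$ exists in any feasible solution of the relaxation, and therefore Algorithm~\ref{Alg:UpdateSubset} adds at least one vertex to $S$. This follows from flow conservation in the SPP*-GCS relaxation: constraint \eqref{eq:sdegreeMICP} forces one unit of flow out of $s$, constraint \eqref{eq:alphasumMICP} forces exactly one unit of $\alpha$-mass to land in $S'$, and the flow-balance constraint through every intermediate vertex of $S$ implies that at least one unit of flow must cross the cut from $S$ to $S'$. Concretely, summing the balance equations over $v \in S \setminus \{s\}$ together with \eqref{eq:sdegreeMICP} yields $\sum_{u \in S, v \in S', (u,v)\in \overline{E}} y_{uv} = 1$, so some edge crossing the cut carries strictly positive flow, and Algorithm~\ref{Alg:UpdateSubset} adds its endpoint in $S'$ to $S$.

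Once (i) and (ii) are in hand, the lemma is immediate: starting from $|S_{\text{init}}| \geq 1$, each iteration increases $|S|$ by at least one integer, and $|S|$ cannot exceed $|V|-1$, so the combined number of Phase 1 and Phase 2 iterations is at most $|V|-1$. I expect the mildly delicate step to be the flow-summation argument in (i) — in particular, ruling out degenerate relaxed solutions in which all flow somehow loops inside $S$ — but the feasibility of SPP*-GCS (guaranteed by the standing assumption that a path from $s$ to $d$ exists and that $S$ is a cut-set with $N_S \neq \emptyset$) combined with the $\alpha$-constraint closes that gap cleanly.
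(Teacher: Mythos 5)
Your proof takes essentially the same route as the paper's: each iteration adds at least one vertex to $S$ because some cut edge $(u,v)$ with $u\in S$, $v\in S'$ carries positive $y_{uv}$, and since $d$ is never added, $|S|\le |V|-1$ bounds the number of iterations. The only difference is that you make explicit (via the $\alpha$-sum and terminal-degree constraints forcing one unit of flow into $S'$) what the paper simply asserts, which is a correct and welcome elaboration.
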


\begin{proof} If the algorithm enters {\it Phase 1}, each iteration of the phase must add at least one vertex from $N_S$ to $S$ since there is at least one edge $(u,v)$ leaving $S$ such that $y_{uv}>0$.  If the algorithm enters {\it Phase 2} and the termination condition in \ref{boundPhase2} is not met, $R^*_{opt}(S,S')$ is some finite value, and therefore, there is at least one edge $(u,v)$ leaving $S$ and entering $S'$ such that $y_{uv}>0$. In both the phases, at most $|V|-1$ vertices can be added to $S$ before one of the termination conditions (lines \ref{whilePhase2}, \ref{boundPhase2} of Algorithm \ref{Alg:AGCS}) become valid. Hence proved.
\end{proof}

\begin{algorithm}[t!]
{\small
	\textbf{Inputs:} \\
	$G=(V,E)$ \tcp{Input graph}
   $s,d\in V$ \tcp{Origin and destination}
   $\mathcal{X}_u ~ \forall u\in V$ \tcp{Input convex sets}
	${h}(v) \ \forall v \in V$ \tcp{admissible lower bounds} 
         $S_{\text{init}}\subset V$  \tcp{Input cut-set} 
	\textbf{Output:}  \\
	$C_{lb}$ \tcp{Lower bound for SPP-GCS} 
        $Sol_f$ \tcp{Feasible solution for SPP-GCS} 
	\textbf{Initialization:}  \\
        $S\leftarrow S_{\text{init}}$ \\
	$C_{lb} \leftarrow 0$ \\

    {\bf Main Loop:} \\

     {\bf \underline{Phase 1}} \\
     
	\While{$d \notin N_S$} 
 	{      \label{whilePhase1} $S'\leftarrow N_S$ \\
          $R^*_{opt}(S,S')$, $\mathcal{F}^*_{nd}$ $\leftarrow$ $SolveRelaxation(S,S')$ \label{solveRelaxPhase1} \\
          $C_{lb}\leftarrow \max(C_{lb},R^*_{opt}(S,S'))$ \label{updateLBPhase1}\\
          $S\leftarrow UpdateSubset(S,S',\mathcal{F}^*_{nd})$ \label{updateSubsetPhase1} \\
	}

   {\bf \underline{Phase 2}} \\

   $R^*_{opt}(S,\{d\})$, 
   $\mathcal{F}^*_d$ $\leftarrow$ $SolveRelaxation(S,\{d\})$ \label{solve1RelaxPhase2} \\
$Sol_f\leftarrow$$UpdateFeasibleSol(Sol_f,\mathcal{F}^*_d)$ \label{sol1FPhase2}\\

      \While{$\{d\} \neq N_S$}
 	{ \label{whilePhase2} $S'\leftarrow N_S\setminus \{d\}$ \\
  
           $R^*_{opt}(S,S')$, $\mathcal{F}^*_{nd}$ $\leftarrow$ $SolveRelaxation(S,S')$ \\
       
          $C_{lb}\leftarrow \max(C_{lb},\min(R^*_{opt}(S,S'),R^*_{opt}(S,\{d\})))$ \label{updateLBPhase2}\\
          $Sol_f\leftarrow$$UpdateFeasibleSol(Sol_f,Sol^*_d)$ \\

          \eIf{$R^*_{opt}(S,S')\geq  R^*_{opt}(S,\{d\})$}{ \label{boundPhase2}
          {\bf break} \\
          }
         {
          $S\leftarrow UpdateSubset(S,S',\mathcal{F}^*_{nd})$ \label{updateSetPhase2}\\
              $R^*_{opt}(S,\{d\})$, 
   $\mathcal{F}_d^*$ $\leftarrow$ $SolveRelaxation(S,\{d\})$\label{solve2RelaxPhase2} \\
$Sol_f\leftarrow$$UpdateFeasibleSol(Sol_f,\mathcal{F}^*_d)$ \label{sol2FPhase2}\\
          
         }
 	}
 	\textbf{return} $C_{lb}$, $Sol_f$
 	\caption{$A^*$-$GCS$}
 	\label{Alg:AGCS}
}
\end{algorithm}

\begin{algorithm}[t!]
{\small
    \SetAlgoLined
 Solve the relaxation in \eqref{eq:relax} given $S$ and $S'$ \\
 $R^*_{opt}(S,S') \leftarrow$ Optimal relaxation cost \\
 $\mathcal{F}^*\leftarrow$ Optimal solution to the relaxation \\
 	\textbf{return} $R^*_{opt}(S,S'),\mathcal{F}^*$
 	\caption{{SolveRelaxation}$(S,S')$}
 	\label{Alg:SolveRelaxation}
}
\end{algorithm}

\begin{algorithm}[t!]
{\small
    \SetAlgoLined
 \tcp{For any $(u,v)\in \overline{E}$, let the optimal value of $y_{uv}$ in $\mathcal{F}^*_{nd}$ be $y^*_{uv}$.}
       $O_S\leftarrow \{v: u\in S, v\in S', y^*_{uv}>0\}$  \\
          $S\leftarrow S\bigcup O_S$  \\
 	\textbf{return} $S$
 	\caption{{UpdateSubset}$(S,S',\mathcal{F}^*_{nd})$}
 	\label{Alg:UpdateSubset}
}
\end{algorithm}

\begin{algorithm}[t!]
{\small
    \SetAlgoLined
       Use randomized rounding \cite{GCS} or other heuristics to convert $\mathcal{F}^*_d$ into a feasible solution,  $Sol^*_f$. \\
        \lIf{$Cost(Sol^*_f)< Cost(Sol_f)$}
       {$Sol_f\leftarrow Sol^*_f$}
 	\textbf{return} $Sol_f$
 	\caption{{UpdateFeasible}$(Sol_f,\mathcal{F}^*_{d})$}
 	\label{Alg:updatefeasible}
  }
\end{algorithm}

\begin{lemma}\label{lemma1}
Consider any subset $S\subset V$ such that $s\in S$ and $d\notin S$. Let the path in an optimal solution to the SPP-GCS be denoted as $\mathcal{P}^*:=(v^*_1=s,v^*_2,\cdots,v^*_k=d)$ and let the optimal points in the corresponding convex sets be denoted as $x_{v^*_i}, i=1,\cdots,k$. For some $p\in \{2,\cdots,k\}$, let ${v^*_p}$ be such that ${v^*_i}\in S$ for $i=1,\cdots,p-1$ and ${v^*_p}\in N_S$. Let $S'$ be any subset of $N_S$ that contains ${v^*_p}$. Then, the optimal cost of the SPP-GCS is at least equal to the optimal relaxation cost of SPP*-GCS defined on $S$ and $S'$, $i.e.$, $ C_{opt}(s,d) \geq R^*_{opt}(S,S')$. 
\end{lemma}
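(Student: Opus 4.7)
The plan is to use the optimal SPP-GCS data to construct an explicit feasible solution to the MICP for SPP*-GCS on $(S, S')$, bound its objective by $C_{opt}(s,d)$ via admissibility of $h$, and then invoke the fact that $R^*_{opt}(S, S')$ is the value of a relaxation of this MICP.

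First I would truncate the optimal path at $v_p^*$. Set $y_{v_i^* v_{i+1}^*} = 1$ for $i = 1, \ldots, p-1$ and $y_{uv} = 0$ on every other edge of $\overline{E}$; set $\alpha_{v_p^*} = 1$ and $\alpha_v = 0$ for the remaining $v \in S'$; and reuse the optimal continuous points $x_{v_i^*}$, lifting them to the perspective variables by $z_{v_i^* v_{i+1}^*} = x_{v_i^*}$, $z'_{v_i^* v_{i+1}^*} = x_{v_{i+1}^*}$ along the prefix and zero elsewhere. Because $v_1^* = s, v_2^*, \ldots, v_{p-1}^* \in S$ and $v_p^* \in S' \subseteq N_S$ by hypothesis, every prefix edge has its tail in $S$ and its head in $\overline{V}$, so it belongs to $\overline{E}$. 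A direct check of \eqref{eq:alphasumMICP}--\eqref{eq:perspectiveMICP} then confirms feasibility: the flow conservation holds because the prefix is a simple $s$-to-$v_p^*$ path, and the perspective constraints are satisfied since $x_{v_i^*} \in \mathcal{X}_{v_i^*}$ and whenever $y_{uv} = 0$ we use $z_{uv} = z'_{uv} = 0 \in \overline{\mathcal{X}}_u$.

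Next I would bound the candidate's objective. Its value is
\[
\sum_{i=1}^{p-1} cost(x_{v_i^*}, x_{v_{i+1}^*}) + h(v_p^*).
\]
Admissibility gives $h(v_p^*) \leq C_{opt}(v_p^*, d)$, and the suffix $(v_p^*, v_{p+1}^*, \ldots, v_k^* = d)$ with its associated points is itself a feasible SPP-GCS solution from $v_p^*$ to $d$, so its cost $\sum_{i=p}^{k-1} cost(x_{v_i^*}, x_{v_{i+1}^*})$ dominates $C_{opt}(v_p^*, d)$ and hence dominates $h(v_p^*)$. Summing the two bounds shows that the candidate's objective is at most $\sum_{i=1}^{k-1} cost(x_{v_i^*}, x_{v_{i+1}^*}) = C_{opt}(s,d)$. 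Consequently $C^*_{opt}(S, S') \leq C_{opt}(s, d)$, and since the convex program defining $R^*_{opt}(S, S')$ is obtained by relaxing the integrality of $y$ and $\alpha$ in the MICP of Lemma~2, we finally have $R^*_{opt}(S, S') \leq C^*_{opt}(S, S') \leq C_{opt}(s, d)$.

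The argument has no deep step; the main obstacle is bookkeeping the perspective reformulation when writing down the candidate, in particular verifying $(z_{uv}, y_{uv}) \in \overline{\mathcal{X}}_u$ both on the used edges (where $y_{uv} = 1$ and $z_{uv} = x_{v_i^*} \in \mathcal{X}_{v_i^*}$) and on the unused edges (where $\lambda = y_{uv} = 0$ forces $z = 0$, which lies in $\overline{\mathcal{X}}_u$ by definition). The conceptual content is just \emph{truncate the optimal tour at the frontier and pay $h$ for the rest}, with admissibility guaranteeing that this pay-off never over-charges.
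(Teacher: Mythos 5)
Your proof is correct and follows essentially the same route as the paper's: truncate the optimal path at the first frontier vertex $v^*_p$, use admissibility of $h(v^*_p)$ to lower-bound the suffix cost, observe the truncated prefix plus terminal $v^*_p$ is feasible for SPP*-GCS on $(S,S')$, and conclude $C_{opt}(s,d)\geq C^*_{opt}(S,S')\geq R^*_{opt}(S,S')$. The only difference is that you spell out the feasibility check explicitly at the level of the MICP/perspective variables, whereas the paper asserts feasibility directly in the nonlinear formulation; this is additional bookkeeping, not a different argument.
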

\begin{proof}
     Now, the cost of the given optimal solution to SPP-GCS is:

    \begin{align}
        & C_{opt}(s,d)= \sum_{i=1}^{k-1} cost(x_{v^*_i},x_{v^*_{i+1}}) \nonumber \\
        &= \sum_{i=1}^{p-1} cost(x_{v^*_i},x_{v^*_{i+1}}) + \sum_{i=p}^{k-1} cost(x_{v^*_i},x_{v^*_{i+1}}) \nonumber \\
        &\geq \sum_{i=1}^{p-1} cost(x_{v^*_i},x_{v^*_{i+1}}) + h(v^*_{p)}.   \hspace{.2cm} \textrm{($\because$ $h(v^*_p)$ is admissible)} \label{eq:boundtheorem1} 
    \end{align}
Now, note that the path $(v^*_1=s,v^*_2,\cdots,v^*_p)$ and the points in the corresponding convex sets $x_{v^*_i}, i=1,\cdots,p$ is a feasible solution to SPP*-GCS defined on $S$ and $S'$. Therefore, the equation above \eqref{eq:boundtheorem1} further reduces to $C_{opt}(s,d) \geq C^*_{opt}(S,S') \geq R^*_{opt}(S,S')$. Hence proved. 
\end{proof}

\begin{theorem}\label{theorem1}
Consider any subset $S\subset V$ such that $s\in S$ and $d\notin S$. Then, $ C_{opt}(s,d) \geq R^*_{opt}(S,N_S)$. 
\end{theorem}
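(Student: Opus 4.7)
The plan is to obtain Theorem 1 as an almost immediate corollary of Lemma 1. The key observation is that Lemma 1 already establishes the bound $C_{opt}(s,d) \geq R^*_{opt}(S,S')$ for \emph{any} $S' \subseteq N_S$ that contains the specific vertex $v^*_p$ (the first vertex along the optimal path that lies outside $S$). So all that is needed is to verify that $S' := N_S$ is a valid choice in that lemma.

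First, I would fix an optimal path $\mathcal{P}^* := (v^*_1 = s, v^*_2, \ldots, v^*_k = d)$ together with its optimal choice of points $x_{v^*_i} \in \mathcal{X}_{v^*_i}$. Because $s \in S$ but $d \notin S$, the set of indices $\{i : v^*_i \notin S\}$ is non-empty (it contains $k$), so there is a well-defined smallest index $p \in \{2, \ldots, k\}$ with $v^*_p \notin S$. By minimality of $p$, every vertex $v^*_1, \ldots, v^*_{p-1}$ lies in $S$; in particular $v^*_{p-1} \in S$ and the edge $(v^*_{p-1}, v^*_p) \in E$, so by the definition of neighborhood, $v^*_p \in N_S$.

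Next, I would take $S' := N_S$. Since $v^*_p \in N_S = S'$, the hypotheses of Lemma 1 are satisfied. Applying Lemma 1 directly yields
\begin{equation*}
C_{opt}(s,d) \;\geq\; R^*_{opt}(S, N_S),
\end{equation*}
which is exactly the claim of Theorem 1.

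There is no real obstacle here, only a bookkeeping point: one has to make sure $v^*_p$ is well-defined, which uses the standing assumption (stated just before the theoretical results section) that at least one $s$--$d$ path exists and the hypothesis that $S$ is a cut-set. Once $v^*_p$ is identified, choosing $S' = N_S$ is the canonical (and largest) admissible choice in Lemma 1, so no further argument is required.
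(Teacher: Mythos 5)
Your proposal is correct and follows essentially the same route as the paper, which simply invokes Lemma \ref{lemma1} with $S' := N_S$. Your additional bookkeeping---verifying that the first vertex $v^*_p$ of the optimal path lying outside $S$ exists and belongs to $N_S$, so the lemma's hypotheses are met---is a careful spelling-out of what the paper leaves implicit, not a different argument.
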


\begin{proof}
This theorem follows by applying Lemma \ref{lemma1} with $S':=N(S)$. Hence proved.
\end{proof}

\begin{theorem}\label{theorem2}
Consider any subset $S\subset V$ such that $s\in S$ and $d\in N_S $. Then, $ C_{opt}(s,d) \geq$ $ \min(R^*_{opt}$ $(S,N_S\setminus \{d\})$,$ R^*_{opt}(S,\{d\}))$. 
\end{theorem}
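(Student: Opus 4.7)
The plan is to proceed by case analysis on where the optimal path $\mathcal{P}^* = (v_1^* = s, v_2^*, \ldots, v_k^* = d)$ first leaves the set $S$, and then invoke Lemma \ref{lemma1} in each case with an appropriate choice of $S'$. Since $s \in S$ and $d \notin S$ (because $d \in N_S$ and $N_S$ is disjoint from $S$ by definition), there must be a well-defined smallest index $p \in \{2, \ldots, k\}$ such that $v_i^* \in S$ for $i = 1, \ldots, p-1$ and $v_p^* \in N_S$. This is exactly the setup required to apply Lemma \ref{lemma1}.

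I would split into two cases depending on whether the ``exit vertex'' $v_p^*$ equals $d$ or not. In the first case, $v_p^* = d$; here I would choose $S' := \{d\}$, which trivially contains $v_p^*$ and is a subset of $N_S$. Lemma \ref{lemma1} then yields $C_{opt}(s,d) \geq R^*_{opt}(S, \{d\})$. In the second case, $v_p^* \in N_S \setminus \{d\}$; here I would choose $S' := N_S \setminus \{d\}$, which contains $v_p^*$ and is a subset of $N_S$. Lemma \ref{lemma1} then yields $C_{opt}(s,d) \geq R^*_{opt}(S, N_S \setminus \{d\})$.

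Since in every scenario $C_{opt}(s,d)$ is bounded below by at least one of the two quantities $R^*_{opt}(S, N_S \setminus \{d\})$ and $R^*_{opt}(S, \{d\})$, it is bounded below by their minimum, which is exactly the desired inequality.

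There is no real obstacle here: the only minor subtlety is verifying that the index $p$ exists (which follows from $s \in S$, $d \notin S$, and the finiteness of the path) and that $S'$ in each case is a valid nonempty subset of $N_S$ containing $v_p^*$, so that the hypotheses of Lemma \ref{lemma1} are met. Since $d \in N_S$ by assumption, both choices of $S'$ are nonempty (in particular $N_S \setminus \{d\}$ is well-defined as a subset of $N_S$, and if it happens to be empty, then $v_p^*$ must equal $d$ and we land in the first case). The theorem then follows immediately from Lemma \ref{lemma1}.
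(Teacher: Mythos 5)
Your proposal is correct and follows essentially the same argument as the paper: a case split on whether the first vertex $v_p^*$ at which the optimal path exits $S$ equals $d$, applying Lemma \ref{lemma1} with $S'=\{d\}$ or $S'=N_S\setminus\{d\}$ accordingly, and concluding via the minimum. Your added remark about the existence of $p$ and the nonemptiness of $S'$ in each case is a harmless elaboration of details the paper leaves implicit.
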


\begin{proof}
    Consider the edge $(v^*_{p-1},v_p^*)$ in the optimal path in Lemma \ref{lemma1} that leaves $S$ for the first time. Either $v_p^*=d$ or  $v_p^*\in N_S\setminus \{d\}$. If $v_p^*=d$, then applying Lemma \ref{lemma1} for $S'=\{d\}$ leads to $ C_{opt}(s,d) \geq R^*_{opt}(S,\{d\})$. On the other hand, if $v_p^*\in N_S\setminus \{d\}$, then applying Lemma \ref{lemma1} for $S'=N_S\setminus \{d\}$ leads to $ C_{opt}(s,d) \geq R^*_{opt}(S,N_S\setminus \{d\})$. Therefore, $ C_{opt}(s,d) \geq \min(R^*_{opt}(S,\{d\}), R^*_{opt}(S,N_S\setminus \{d\}))$. Hence, proved.
\end{proof}

\section{Numerical Results}

\noindent {\bf Set up:} We initially test our algorithm on graphs generated from mazes and axis-aligned bars in 2D (Fig. \ref{fig:instance-with-gcs}). Due to space constraints, the procedures for generating these graphs are given in the supplementary material. The convex sets in the GCS generated from any maze and axis-aligned instances are line segments (Fig. \ref{fig:mazeGCS}) and unit square regions (Fig. \ref{fig:barGCS}) respectively. The size of the GCS generated for each of the maps is provided in Table \ref{tab:small-instance-data}. The destination for each instance is always chosen to be the centroid of the top-rightmost square in the instance.

\begin{table}[h!]
    \small
    \centering
    \begin{tabular}{ccrrc}
        \toprule
        Map No. & Type & $|V|$ & $|E|$ \\ 
        \midrule
        \csvreader[late after line=\\]{small_instance_data.csv}{1=\one,2=\two,3=\three,4=\four,5=\five}{\one & \two & \three & \four}
        \bottomrule
    \end{tabular}
\caption{ 2D instances generated from mazes and bars.}
\label{tab:small-instance-data}
\end{table}

{We will compare the performance of the lower bounds generated by \agcs with the lower bound generated by the baseline algorithm that solves the convex relaxation of SPP-GCS on the entire graph. The upper bound\footnote{We did not use the rounding heuristic in \cite{GCS} to generate upper bounds because its performance, especially for maps 5, 6 and the larger 2D/3D maps, was significantly worse compared to the bounds produced by the two-step heuristic.} for both algorithms is the length of the feasible solution generated by the two-step heuristic.} Note that the size of the cut-set during any iteration of \agcs determines the sizes of the convex relaxations we will solve. Therefore, we will keep track of the sizes of the cut-sets in addition to the run times of the algorithms. Also, the baseline is equivalent to implementing \eqref{eq:relax} with $S:=V\setminus \{d\}$; therefore, the size of the cut-set corresponding to the baseline is $|V|-1$.  All the algorithms were implemented using the Julia programming language \cite{Julia-2017} and run on an Intel Haswell 2.6 GHz, 32 GB, 20-core machine. Gurobi \cite{gurobi} was used to solve all the convex relaxations. 

\noindent{\bf Heuristics for generating underestimates:} {We designed two heuristics to generate underestimates for \agcs. Given a set, the first heuristic ($h_1$) computes the shortest Euclidean distance between any point in the set and the destination, ignoring all other vertices/edges in the graph. $h_1$ is very fast and requires negligible computation time; it is by default present in \agcs and is already part of all the run times reported in the results section. While $h_1$ finds consistent underestimates, its bounds may not be strong.

The second heuristic ($h_2$), referred to as the {\it expand and freeze} heuristic, computes underestimates of the optimum through an iterative process. It starts from the destination and proceeds in the reverse direction, adding a subset of vertices $U$ in each iteration to the set containing the destination while determining the underestimate for each vertex in $U$. Specifically, in each iteration, the subset $U$ and its corresponding underestimates are obtained by solving a convex relaxation of SPP*-GCS. Any vertex that is incident to any fraction of an edge in the relaxed solution is included in $U$.} To limit the size of the relaxations, as soon as the size of the set containing the destination reaches a limit (say $n_{max}$), we shrink all the vertices in the set into a new destination vertex and repeat the process until all the (reachable) vertices are visited. In this paper, we set $n_{max}$ as 100. Generally, for our maps, $h_2$ produced tighter estimates than $h_1$, but $h_2$ can be inconsistent. {$h_2$ can run either in the order of seconds or minutes, depending on the map. We specify its computation times for all the maps in each of the results sections.} We also study the impact of using a convex combination of the two heuristics, $i.e.$, for any vertex $v\in V$, we define the combined heuristic estimate as $h(v):=(1-w)h_1(v) + wh_2(v)$ where $w$ is the weighting factor. 

\begin{figure}[tb!]
\centering
  \begin{subfigure}[t]{1\linewidth}
  \centering
    \begin{subfigure}[t]{.3\linewidth}
        \centering\includegraphics[width=1\linewidth]{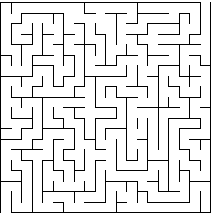}
    \end{subfigure}
    \begin{subfigure}[t]{.3\linewidth}
        \centering\includegraphics[width=1\linewidth]{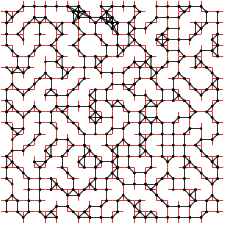}
  \end{subfigure} 
  \caption{Maze and its GCS}
   \label{fig:mazeGCS}
  \end{subfigure}
  \\
  \begin{subfigure}[t]{1\linewidth}
  \centering
    \begin{subfigure}[t]{.3\linewidth}
        \centering\includegraphics[width=1\linewidth]{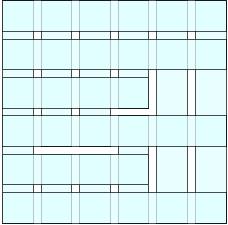}
    \end{subfigure}
    \begin{subfigure}[t]{.3\linewidth}
        \centering\includegraphics[width=1\linewidth]{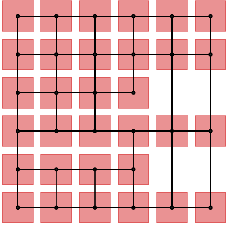}
  \end{subfigure} 
  \caption{Axis-aligned bars and its GCS}
  \label{fig:barGCS}
  \end{subfigure}
  \caption{2D maps and their GCS. In the GCS corresponding to the Maze, the red segments show the convex sets and the black lines show the edges. Similarly, in the GCS corresponding to the axis-aligned bars, the shaded squares show the convex sets and the black lines show the edges.}
  \label{fig:instance-with-gcs}
\end{figure}

\begin{figure*}[htb!]
\centering
  \begin{subfigure}[t]{.3\linewidth}
    \centering\includegraphics[width=0.65\linewidth]{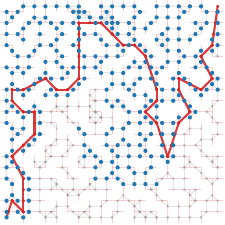}
    \caption{$|S_{A^*}|=288, w = 0.0$}
  \end{subfigure} ~~
  \begin{subfigure}[t]{.3\linewidth}
    \centering\includegraphics[width=0.65\linewidth]{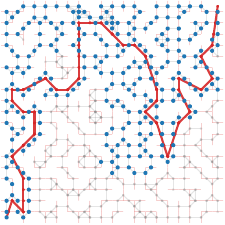}
    \caption{$|S_{A^*}|=266, w = 0.5$}
  \end{subfigure} ~~
  \begin{subfigure}[t]{0.3\linewidth}
    \centering\includegraphics[width=0.65\linewidth]{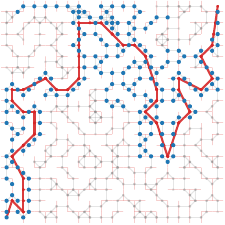}
    \caption{$|S_{A^*}|=196$, $w = 1.0$}
  \end{subfigure}
  \caption{{The feasible path (thick red lines) and the cut-set vertices (shown in color blue) produced by the two-step $A^*$ based heuristic for different weights.} There are 415 vertices in this GCS corresponding to map no. 2. \agcs in this case terminated in 1 iteration for all the weights and produced the same bounds. By first finding the cut-set ($S_{A^*}$) and then applying \agcs with $S_{init}:=S_{A^*}$, we reduced the size of the relaxation with $|V|-1=414$ vertices into a relaxation with $|S_{A^*}|$ vertices.}
  \label{fig:maze-solutions}
\end{figure*}

\noindent{\bf \normalsize Comparisons based on number of iterations and the sizes of the cut-sets for $\boldsymbol{\mathit{A^*}}$-$\boldsymbol{\mathit{GCS}}$  variants:} 
We compare the performance of \agcs for two choices of $S_\text{init}$: one with $S_\text{init}:=\{s\}$ and another with $S_\text{init}:=S_{A^*}$ (refer to the key features of \agcs on page 4). We fix the origin for each map in Table \ref{tab:small-instance-data} at the centroid of the left-bottommost unit square and implement the variants of \agcs on the corresponding graphs for heuristic weight  $w=1$. Both choices of $S_\text{init}$ produced the same bounds for all the six maps. Therefore, for both the variants, we compare the total number of iterations for both phases of \agcs, the size of the cut-set ($|S|$) upon termination of \agcs, and the total run time (in secs.) as shown in Table \ref{tab:small-instance-iter-start}. While $|S|$ for both variants of \agcs are relatively closer, \agcs initialized with $S_\text{init}:=\{s\}$ required significantly more iterations and run time to terminate compared to the \agcs initialized with $S_\text{init}:= S_{A^*}$. This trend remained the same for any other choice of origin in these maps. Therefore, for the remaining results in this paper, we will only focus on the variant of \agcs initialized with $S_\text{init}:=S_{A^*}$. Note that in this variant, the algorithm will never enter Phase 1 since the destination $d$ is already present in the neighborhood of $S_{A^*}$. Henceforth, the number of iterations of \agcs will simply refer to the number of iterations in Phase 2 of \agcs.

\begin{table}[!htb]
    \centering
    \small
    \footnotesize
    \begin{tabular}{ccccccc}
         \toprule 
         \multirow{2}{*}{Map No.} & \multicolumn{3}{c}{$S_\text{init}:=\{s\}$} & \multicolumn{3}{c}{$S_\text{init}:=S_{A^*}$} \\
         \cmidrule{2-7}
         & $n_{iter}$ & $|S|$ & time & $n_{iter}$ & $|S|$ & time \\
         \midrule 
         \csvreader[late after line=\\]{set_vs_source.csv}{1=\one,29=\ten,31=\onetwo,30=\onethree,26=\onenine,28=\twoone,27=\twotwo}{\one & \ten & \onethree & \onetwo & \onenine & \twotwo & \twoone} 
         \bottomrule
    \end{tabular}
\caption{ No. of iterations ($n_{iter}$), the size of the cut-set ($|S|$) upon termination and the run time (in secs.) for the \agcs variants.}
\label{tab:small-instance-iter-start}
\end{table}

\begin{table}[!htb]
    \centering
    \footnotesize
    \begin{tabular}{ccccccc}
         \multicolumn{6}{c}{} \\
         \cmidrule{1-7}
           Map No. & 1 & 2 & 3 & 4 & 5 & 6\\
         \midrule 
          Time &14.4 & 21.5 & 48.4 & 151.7 & 43.3 & 94.4 \\
         \bottomrule
    \end{tabular}
\caption{Heuristic $h_2$ run time in secs.}
\label{tab:h2runtimes}
\end{table}

\begin{table*}[t!]
    \centering
    \footnotesize
    {\scriptsize
    \begin{tabular}{ccrrrrrrrrrrrrrrr}
         \toprule 
         \multirow{2}{*}{Map No.} & \multirow{2}{*}{Algorithm} & \multicolumn{3}{c}{$w=0$} & \multicolumn{3}{c}{$w=0.25$} & \multicolumn{3}{c}{$w=0.5$} & \multicolumn{3}{c}{$w=0.75$} & \multicolumn{3}{c}{$w=1.0$} \\
         \cmidrule{3-17}
         & & $|S|$ & time & gap & $|S|$ & time & gap & $|S|$ & time & gap & $|S|$ & time & gap & $|S|$ & time & gap \\ 
         \midrule 
         \csvreader[head to column names, 
         after line=\ifthenelse{\equal{\flag}{}}{\\}{\\\midrule}, before first line=, table foot=\\\bottomrule,]{small_stat.csv}{1=\one,2=\two,3=\three,4=\four,5=\five,6=\six,7=\seven,8=\eight,9=\nine,10=\ten,11=\eleven,12=\twelve,13=\thirteen,14=\fourteen,15=\fifteen,16=\sixteen,17=\seventeen}{\one & \two & \three & \four & \five & \six & \seven & \eight & \nine & \ten & \eleven & \twelve & \thirteen & \fourteen & \fifteen & \sixteen & \seventeen} \\[-2ex]\bottomrule
    \end{tabular}}
    \caption{ Average size of the cut-set ($|S|$), average run times (in secs) and average optimality gap (in \%) for different levels of heuristic weights for maps in Table \ref{tab:small-instance-data}. }
\label{tab:small-stat}
\end{table*}

\noindent{\bf Impact of the quality of underestimates:} 
For each map in Table \ref{tab:small-instance-data}, we generated 100 instances, with each instance choosing a different origin for the path. {The run times for heuristic $h_2$ are shown in Table \ref{tab:h2runtimes}. Note that this heuristic is run only once and is used as an input for all the 100 instances for any heuristic weight $w>0$.} Given an instance $I$, its optimality gap is defined as $100 \times \frac{C_f - C_{lb}}{C_{lb}}$, where $C_f$ is the length of the feasible solution obtained by the two-step algorithm in Remark \ref{rem:feasible}, and $C_{lb}$ is the bound returned by \agcs. In Table \ref{tab:small-stat}, we present the results (average size of cut-sets, average run times in seconds, and the average optimality gap) at the end of the first iteration of \agcs (referred to as \agcsi) as well as upon the termination of \agcs (referred to as \agcsf). Even with a heuristic weight $w=0$ (which corresponds to the first heuristic $h_1$ that provides relatively weak bounds), \agcs is able to reduce the size of the cut-sets (which, in turn, {\it reduces the sizes of convex relaxations to solve}) by more than 45\%, on average (except in map no. 6). In fact, by selecting the cut-set informed by \astar with $w=0$ and completing just one iteration of \agcs, we achieve optimality gaps in a shorter time that closely match those provided by the baseline algorithm (except for map no. 6). Refer to Fig. \ref{fig:maze-solutions} for an illustration. For map. no. 6 the underestimates provided by $h_1$ was much weaker compared to $h_2$, and hence \agcs performed significantly better when the heuristic weight was increased to 1. 

{For single query problems, the results for the 2D maps show that the default heuristic ($h_1$) in \agcs is sufficient to provide benefits in terms of reduction in problem sizes and computation time. On the other hand, for multi-query problems, using $h_2$ can further significantly reduce the computation time while providing similar optimality gaps. For example, in map no. 6, \agcsi is at least six times faster than the baseline algorithm for $w=1$. \agcs performed the best in map no. 4 for all heuristic weights; specifically, \agcsi found optimal solutions with average run times considerably lower compared to the baseline for this map.} As the heuristic weight increased, the size of the cut-sets as well as the run times decreased on average. 

\begin{figure}[t!]
\centering\includegraphics[scale=0.16]{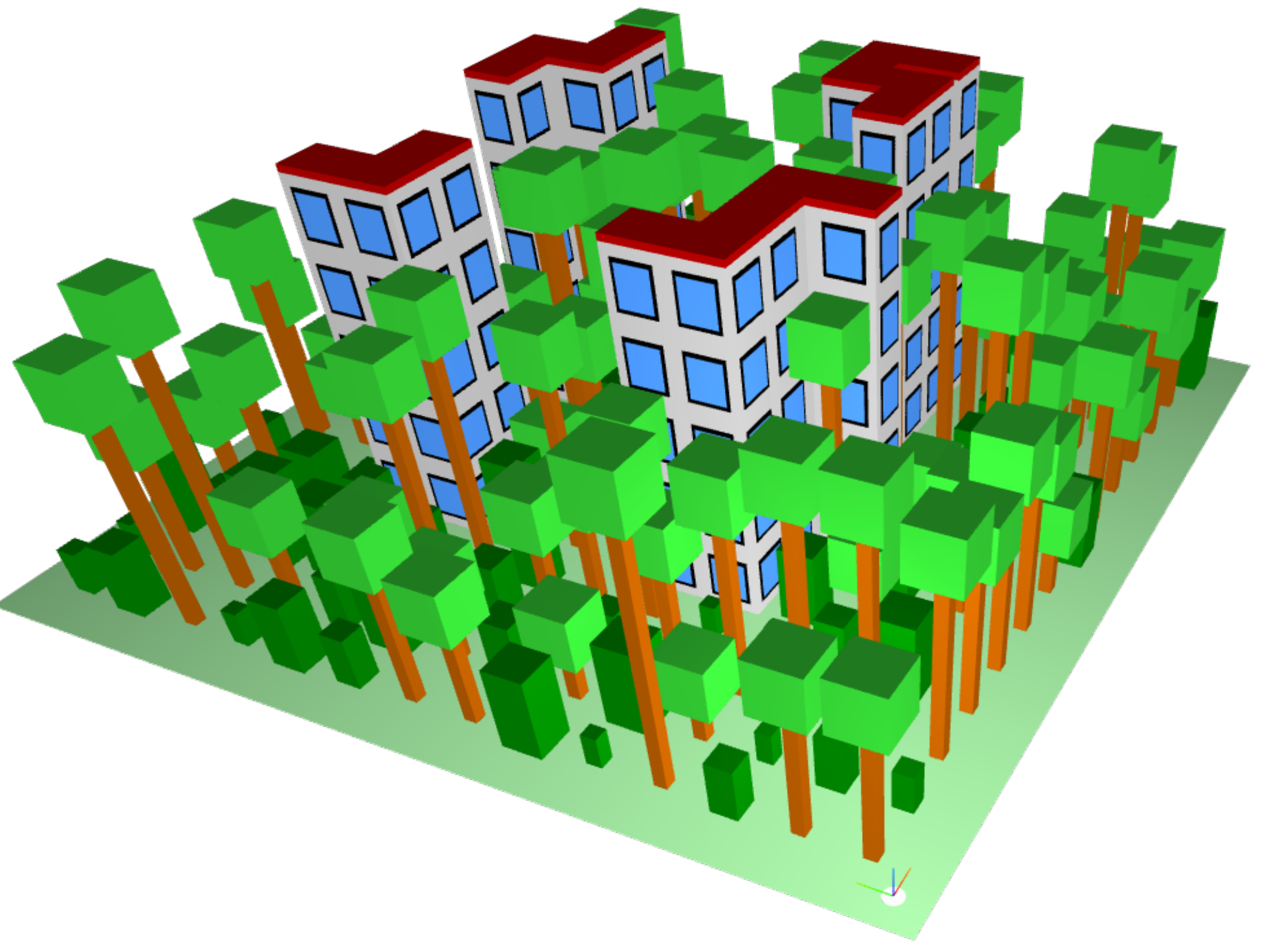}
    \caption{3D village map.}
    \label{fig:villagemap}
\end{figure}

\begin{figure}[t!]
    \centering
    \includegraphics[width=.8\linewidth]{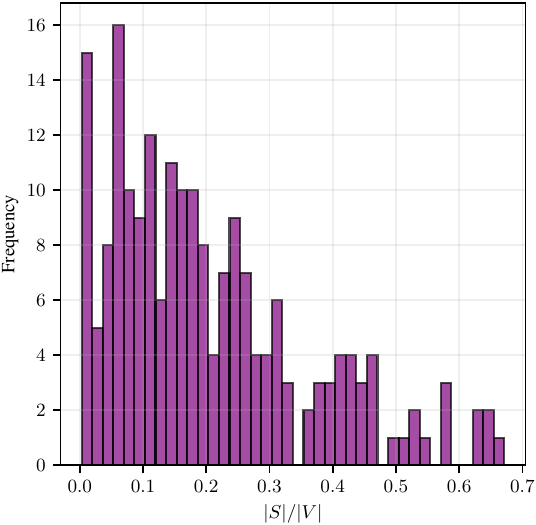}
    \caption{Histogram showing the empirical distribution of the fraction of vertices solved by \agcsf over all the 200 instances of the 3D maps. Here, $|S|$ denotes the size of the cut-set and $|V|$ represents the total number of vertices in the graph.}
    \label{fig:histogram}
\end{figure}

\noindent{\bf $\boldsymbol{\mathit{A^*}}$-$\boldsymbol{\mathit{GCS}}$  on large 3D village maps:} Next, \agcs was tested on two large 3D village maps (Fig. \ref{fig:villagemap}) created using the meshcat library in Python. We followed a procedure similar to the one described for maps in Table \ref{tab:small-instance-data} to develop the GCS for these village maps. The convex sets in these maps, resulting from the intersection of adjacent cubes, are axis-aligned squares (shared sides of adjacent cubes) computed using the algorithm in \cite{zomorodian2000fast}. The GCS corresponding to the first 3D map has 6010 vertices and 166,992 edges, while the GCS for the second 3D map has 11,258 vertices and 321,603 edges. In both maps, the destination was chosen at one of the corners of the 3D region. The computation time for heuristic $h_2$ for these two maps was 53.30 secs and 89.10 secs respectively.

We generated 100 instances for each 3D map, varying the origins of the paths. Across all 200 instances, \agcsf produced approximately the same optimality gaps as the baseline but, on average, explored only 20.38\% of all vertices in the graphs. The histogram showing the fraction of vertices solved by the convex program using \agcsf relative to the total number of vertices for all instances is presented in Fig \ref{fig:histogram}. Specifically, for 63\% of the first 3D map instances, \agcsf ran faster than the baseline, with an average savings of 334.27 seconds (around a 61\% reduction). Similarly, for 63\% of the second 3D map instances, \agcsf ran faster than the baseline, with an average savings of 904.49 seconds (around a 65\% reduction). For the remaining instances, even though \agcsf required more time, \agcsi produced optimality gaps comparable to the baseline with smaller computation times ({\it refer to the supplementary material}).

\section{Conclusions}

In this paper, we show how to effectively combine the existing convex-programming based approach with heuristic information to obtain near-optimal solutions for SPP-GCS. While our results clearly illustrate the benefits of \agcs for travel costs based on Euclidean distances, we do not claim that the proposed approach is superior to directly solving the relaxation of SPP-GCS for every instance. Nevertheless, the proposed approach is the first of its kind for solving convex relaxations of SPP-GCS and opens a new avenue for research into related problems. 

 \section{Acknowledgment}
    This material is based upon work partially supported by the National Science Foundation under Grant No. 2120219. Any opinions, findings, and conclusions or recommendations expressed in this material are those of the author(s) and do not necessarily reflect the views of the National Science Foundation.

\bibliography{references}

\section{Supplementary Material}

\subsection{Special case of SPP-GCS}\label{specialcase}

\begin{remark}\label{remark1}
    In the special case when the convex sets reduce to singletons, SPP*-GCS simplifies to a variant of a SPP where the objective is to find a path from the origin to a vertex $v_l$ in $S'$ such that the sum of the cost to arrive at $v_l$ and the heuristic cost $h(v_l)$ is minimized. This variant of SPP is exactly the optimization problem solved during each iteration of \astar, where $S$ represents the closed set and $S'$ denotes the open set. 
\end{remark}
\begin{remark}\label{remark2}
 In the special case when the convex sets reduce to singletons, the relaxation of SPP*-GCS in \eqref{eq:relax} can be re-formulated as a linear program with a coefficient matrix that is Totally Uni-Modular (TUM) \cite{lawler:76}, and as a result, its extreme points are optimal solutions for the SPP*-GCS.    
\end{remark}

Assume each of the convex sets associated with the vertices in $V$ is a singleton. Let the cost of traveling from $u\in V$ to $v\in V$ be denoted as $cost_{uv}\in \mathbb{R}^+$. Let the underestimates denoted by $h(u),~ u\in V$ be consistent, $i.e.$, $h(u)\leq cost_{uv} + h(v)$ for all $(u,v)\in E$. In this special case, consider an implementation of \astar on $(V,E)$ with its closed set initialized to $\{s\}$. Until termination, each iteration of \astar picks a vertex (say $v^*$) in $N_S$ with the least $f$ cost\footnote{Here, we borrow the usual definition of $f$ and $g$ costs from A* \cite{hart1968aFormalBasis}.} for expansion and moves $v^*$ from $N_S$ to $S$. If the underestimates are consistent, once $v^*$ is chosen for expansion and added to $S$, it will never be chosen for expansion again ($i.e.$, never removed from $S$ again). \astar terminates when $v^*$ is the destination. 
This is also how \agcs performs in this special case for the following reasons: From Remarks \ref{remark1}-\ref{remark2}, solving the relaxations in either Phase 1 or Phase 2 of \agcs is equivalent to solving a shortest path problem where the objective is to choose a vertex $v$ in $N_S$ with the least $f$ cost (the sum of the travel cost from $s$ to $v$ and $h({v})$) such that the path starts from the origin, travels through the vertices in $S$ and reaches one of the vertices in $N_S$. Until termination, each iteration of any of the phases in \agcs will move one vertex from $N_S$ to $S$ (based on where the shortest path ends in $N_S$). In addition, if the shortest path found in Phase 2 of \agcs ends at the destination (or if $R^*_{opt}(S,S')\geq  R^*_{opt}(S,\{d\})$ is true), \agcs will terminate just like \astar.

\subsection{GCS generation procedure}

To generate a GCS corresponding to a maze, we first partition its free space into unit squares and consider the obstacle-free sides shared by any two adjacent squares as vertices in our GCS. The convex sets here are the line segments corresponding to the shared sides. Two vertices (or shared sides) in a maze-GCS are connected by an edge if the shared sides belong to the same unit square. An example of a GCS corresponding to a maze is shown in Fig. \ref{fig:mazeGCS}. In the case of an instance with randomly generated, axis-aligned bars, we partition all the bars into unit squares and connect any two unit squares with an edge if they both belong to the same bar. Here, the convex sets are square regions, and an example of a GCS corresponding to a map with bars is shown in Fig. \ref{fig:barGCS}.

\subsection{\bf $\boldsymbol{\mathit{A^*}}$-$\boldsymbol{\mathit{GCS}}$ on a large 2D map} \agcs was also tested on a large 2D maze with its corresponding GCS containing 25,615 vertices and 56,550 edges. Similar to the previous runs, we generated 100 instances, varying the origins of the paths. {For all the instances, \agcs terminated in just one iteration for all the heuristic weights and produced optimal solutions. Specifically, \agcs with the default heuristic ($h_1$) outperformed the baseline both in terms of reduced problem sizes (cut-sets) and run time by more than 50\% (Fig. \ref{fig:large-maze-stat}). Heuristic $h_2$ required 330.25 secs to compute for this map. In a multi-query setting, using \agcs with $h_2$ can further provide significant benefits as seen in Fig. \ref{fig:large-maze-stat}.}

\begin{figure}
    \centering
    \includegraphics[scale=.8]{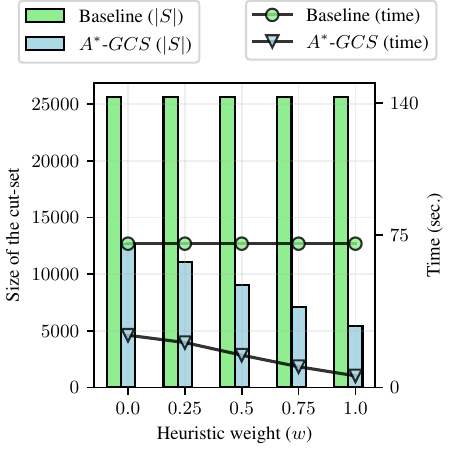}
    \caption{Average cut-set sizes and run times (in secs.) for a large 2D map. {Both the algorithms found the optimal solutions for all the instances in this map.} }
    \label{fig:large-maze-stat}
\end{figure}

\begin{figure*}[tb!]
    \centering 
    \includegraphics[scale=.95]{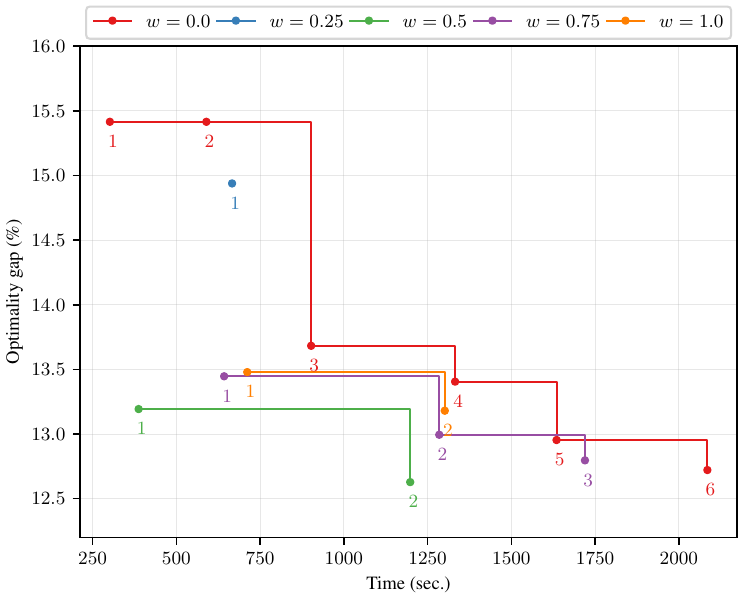}
    \caption{Results for a first 3D map instance obtained after each iteration of \agcs with varying weights. The numbers marked next to the lines indicate when each of the iterations of \agcs completed.  }
    \label{fig:maze-trace}
\end{figure*}

\subsection{$\boldsymbol{\mathit{A^*}}$-$\boldsymbol{\mathit{GCS}}$ on a large 3D map} 

Here, we explore an instance of the first 3D map where the computation times for \agcsf is larger than the baseline. The baseline algorithm for this instance required 1848 seconds to solve with an optimality gap of 12.56\%. The optimality gaps obtained after each iteration of \agcs and their corresponding run times are shown in Fig. \ref{fig:maze-trace}. {\it The key insight from this figure is that \agcsi is able to achieve an optimality gap of around 15.5\% in less than 300 seconds for heuristic weight $w=0$ (corresponding to the heuristic $h_1$).} Note that running \agcs to termination may not be helpful in this map, as the run times per iteration are relatively high; therefore, for heuristic weight $w=0$, \agcsf requires around 2000 seconds to complete. Even if one adds the $h_2$ computation time (53.3 secs) to the run time for any weight $> 0$, \agcsi outperforms the baseline in terms of run times while providing similar optimality gaps.

\end{document}